\newtheorem{thm}{Theorem}[section]
\newtheorem{cor}[thm]{Corollary}
\newenvironment{example}{\medskip\refstepcounter{thm}
\noindent{\bf Example \thesection.\arabic{thm}\ }}{\medskip}
\newenvironment{proof}[1][,]{\medskip\ifcat,#1
\noindent{{\it Proof}:\ \ }\else\noindent{\it Proof of #1.\ }\fi}
{\hfill$\square$\medskip}
\newenvironment{remark}[1][Remark]{\begin{trivlist}
\item[\hskip \labelsep {\bfseries #1}]}{\end{trivlist}}
\DeclareMathOperator\vol{vol}
\DeclareMathOperator\gl{gl}
\DeclareMathOperator\tR{\tilde{R}}
\DeclareMathOperator\tnabla{\widetilde{\nabla}}
\DeclareMathOperator\tr{tr}
\def\C{\mathbb{C}}
\def\Sph{\mathbb{S}}
\def\R{\mathbb{R}}
\begin{document}

\title{Maslov, Chern-Weil and Mean Curvature}

\author{Tommaso Pacini\footnote{University of Torino, Italy, \texttt{tommaso.pacini@unito.it}}}


\maketitle

\begin{abstract}
We provide an integral formula for the Maslov index of a pair $(E,F)$ over a surface $\Sigma$, where $E\rightarrow\Sigma$ is a complex vector bundle and $F\subset E_{|\partial\Sigma}$ is a totally real subbundle. As in Chern-Weil theory, this formula is written in terms of the curvature of $E$ plus a boundary contribution. 

When $(E,F)$ is obtained via an immersion of $(\Sigma,\partial\Sigma)$ into a pair $(M,L)$ where $M$ is K\"ahler and $L$ is totally real, the formula allows us to control the Maslov index in terms of the geometry of $(M,L)$. We exhibit natural conditions on $(M,L)$ which lead to bounds and monotonicity results.
\end{abstract}
\section{Introduction}\label{s:intro}
The goal of this paper is to investigate and generalize a rather surprising relationship between two apparently unrelated quantities: the Maslov index of a surface with boundary, and the mean curvature of its boundary constraint. 

Some forms of this relationship are already known, cf. \textit{e.g.} \cite{Morvan} concerning Lagrangian boundary data in $\C^n$ or \cite{CG} concerning surfaces $\Sigma$ immersed in a K\"ahler-Einstein manifold $M$ whose boundaries $\partial\Sigma$ lie on a minimal Lagrangian submanifold $L\subset M$. 

These geometric assumptions on $(M,L)$ are, however, extremely strong. The Maslov index is well-defined even with respect to non-integrable complex structures on $M$ and totally real boundary data, while mean curvature can be defined with respect to any metric. It is thus interesting to understand if such a relationship exists in a more general context.

Our tool for studying this issue is a very general integral formula for the Maslov index of an abstract ``bundle pair'' $(E,F)$ over a surface with boundary $(\Sigma,\partial\Sigma)$, cf. Equation (\ref{eq:MCW}). Standard definitions of this index are either topological or axiomatic. The integrand in our formula involves data arising from a connection on $E$. It thus provides a third, geometric, definition in the spirit of Chern-Weil theory. 

When the pair is determined by an immersion of $(\Sigma,\partial\Sigma)$ into $(M,L)$ we can rephrase this formula in terms of geometric data on $(M,L)$, cf. Equation (\ref{eq:MCWbis}). This formula holds under very general assumptions on $(M,L)$. In the special case where $M$ is K\"ahler and $L$ is totally real we show that the boundary contribution can be expressed in terms of the ``$J$-mean curvature'' of $L$, defined in terms of a perturbed volume functional specifically tailored towards totally real submanifolds, introduced in \cite{Bor}. Finally, when $M$ is K\"ahler and $L$ is Lagrangian we recover the classical expression of the boundary contribution in terms of the standard mean curvature of $L$. Going in the opposite direction, we thus find that the ``root cause'' of the appearance of mean curvature is the same as that which generates the whole body of differential geometry of totally real submanifolds developed in \cite{LP}, \cite{LP2}, \cite{LP3}.

Applications of our formulae are discussed in Section \ref{s:applications}. We show that they provide a unifying point of view on several results in the literature, also extending them to the more general context of non-constant curvature, totally real boundary data or non-integrable $J$.

\ 

\textit{Acknowledgements.}
My interest in this topic was largely triggered by stimulating conversations with Claude LeBrun, Jason Lotay and Roberta Maccheroni.

Integral formulae for certain Maslov indices also appear for example in \cite{Dazord}, \cite{Ono} and \cite{CS}. These papers however mostly focus on pairs determined by immersions of $\Sigma$ into a K\"ahler manifold $M$, and only consider Lagrangian boundary data. Only the formula in \cite{CS} allows for abstract bundle pairs, but it relies on a simplifying hypothesis which completely eliminates the (Lagrangian) boundary contribution. Our own formula includes all these cases, but is much more general. In summary, results concerning surfaces immersed in $M$ K\"ahler and with boundary on $L$ Lagrangian should probably be considered classical. Beyond this case, we are not aware of serious overlaps between this paper and the available literature.

\section{Preliminaries}\label{s:prelim}
\paragraph{Induced curvatures.} Let $E$ be a $k$-dimensional complex bundle over any manifold $N$, possibly with boundary. We will typically denote sections of $E$ by $\sigma$, sections of $E^*$ by $\alpha$ and tangent vectors or vector fields on $N$ by $X$.

Choose a connection $\nabla$ on $E$, thus $\nabla:\Lambda^0(E)\rightarrow \Lambda^1(E)$. As usual, we can extend $\nabla$ to all tensor bundles associated to $E$ via the Leibniz rule and by imposing that the connections commute with contractions. We will denote these induced connections using the same symbol $\nabla$. In particular,
\begin{itemize}
 \item The induced connection on $E^*$, equivalently on $\overline{E}$, satisfies
 \begin{equation*}
(\nabla_X\alpha)\sigma=X(\alpha(\sigma))-\alpha(\nabla_X\sigma).  
 \end{equation*}
\item The induced connection on $\Lambda^kE:=E\wedge\dots\wedge E$ satisfies
\begin{equation*}
\nabla_X(\sigma_1\wedge\dots\wedge\sigma_k)=\left((\nabla_X\sigma_1)\wedge\dots\wedge\sigma_k\right)+\dots+\left(\sigma_1\wedge\dots\wedge(\nabla_X\sigma_k)\right).
\end{equation*}
\end{itemize}
Let $R\in\Lambda^2(N)\otimes\gl(E)$ denote the curvature of $\nabla$, defined by
\begin{equation*}
 R(X,Y)\sigma:=\nabla_X\nabla_Y\sigma-\nabla_Y\nabla_X\sigma-\nabla_{[X,Y]}\sigma.
\end{equation*}
One can use the analogous formula to calculate the curvatures of the induced connections. In particular,
\begin{itemize}
 \item The curvature on $E^*$ is $-R^*$, where $R^*\in\Lambda^2(N)\otimes\gl(E^*)$ is the dual of $R$.
 \item The curvature on $\Lambda^kE$ is $\tr(R)\in\Lambda^2(N)\otimes\gl(\Lambda^kE)$.
\end{itemize}
Using the identifications $\gl(\Lambda^kE)\simeq\C\simeq\gl(\Lambda^kE^*)$ we conclude that the curvature on $\Lambda^kE^*$ is $-\tr(R^*)=-\tr(R)\in\Lambda^2(N)$.

Any other connection on $E$ is of the form $\tnabla=\nabla+A$, for some $A\in\Lambda^1(N)\otimes \gl(E)$. The corresponding curvature is $\tR=R+dA+A\wedge A$. On the induced bundles one obtains
\begin{itemize}
 \item On $E^*$, the connections are related by $\tnabla=\nabla-A^*$ where $A^*\in\Lambda^1(N)\otimes\gl(E^*)$ is the dual of $A$. The corresponding curvature is $-R^*-d(A^*)+A^*\wedge A^*$.
 \item On $\Lambda^kE$, $\tnabla=\nabla+\tr(A)$, where $\tr(A)\in\Lambda^1(N)\otimes\gl(\Lambda^kE)\simeq\Lambda^1(N)$. The corresponding curvature is $\tr(R)+d(\tr(A))$ because $\tr(A)\wedge\tr(A)=0$.
 \item On $\Lambda^kE^*$, $\tnabla=\nabla-\tr(A^*)=\nabla-\tr(A)$. The corresponding curvature is $-\tr(R)-d(\tr(A))$.
\end{itemize}

Recall from Chern-Weil theory that $\tr(R)\in\Lambda^2(N)$ is closed. It follows from the above that it is independent of $\nabla$ up to an exact form $d(\tr(A))$. Choosing $\nabla$ to be unitary with respect to some choice of Hermitian metric $h$ on $E$ we obtain $\tr(R)\in i\Lambda^2(N,\R)$.

\paragraph{The Maslov index.} Let $(\Sigma,\partial\Sigma)$ be a compact surface with (possibly empty) boundary. Let $E$ be a $k$-dimensional complex vector bundle over $\Sigma$ and $F\subset E$ a $k$-dimensional totally real subbundle of $E_{|\partial\Sigma}$, defined over $\partial\Sigma$.
\begin{itemize}
\item When $\partial\Sigma\neq\emptyset$ (thus a collection of circles) it is well-known that $E$ is trivial, though not in a unique way. Given a choice of trivialization, one can measure the twisting (winding number) of $F$ around $\partial\Sigma$ with respect to the trivialization of $E$, obtaining an integer. This is a topological definition of the \textit{Maslov index} of the pair $(E,F)$. One can check that it is independent of the choice of trivialization. This index can alternatively be characterized axiomatically in terms of its behaviour under natural ``connect sum'' operations on $(E,F)$ and $(\Sigma,\partial\Sigma)$: we refer to \cite{MS} Appendix C.3 for details.
\item When $\partial\Sigma=\emptyset$ (thus $F$ is not defined) the Maslov index coincides with twice the first Chern number $c_1(E)\cdot\Sigma$ of $E$.
\end{itemize}
The Maslov index appears in many different contexts in geometry and analysis. In particular, when $\Sigma$ is a Riemann surface, it appears in the generalized Riemann-Roch theorem for surfaces with boundary, cf. \cite{MS} Appendix C.1: it then replaces the term $c_1(E)\cdot\Sigma$ (\textit{i.e.} the degree of $E$) which appears in the classical Riemann-Roch theorem concerning holomorphic line bundles $E$ over closed surfaces. In both cases, an important geometric application of this theorem concerns the deformation theory (moduli spaces) of Riemann surfaces immersed in complex manifolds $(M,J)$, where $J$ is not necessarily integrable. In this case one chooses $E$ to be $TM_{|\Sigma}$ (alternatively, to prevent reparametrizations, $E$ may be the complex normal bundle over $\Sigma$). When $\Sigma$ has boundary, the theory is well-defined as long as $\partial\Sigma$ is constrained to lie on a given totally real submanifold $L\subset M$: the tangent space of $L$ then defines the subbundle $F$. Bounds on the Maslov index give information on the dimension of the moduli space.

\paragraph{Monotonicity.} Deformation theory plays an important role in Symplectic Geometry. In this case we are given a symplectic manifold $(M,\omega)$ and we choose a (not necessarily integrable) compatible complex structure $J$. In this setting there is an important subclass of totally real manifolds: \textit{Lagrangian submanifolds}, defined by the condition $\omega_{|TL}\equiv 0$. When $\partial\Sigma$ lies on a Lagrangian submanifold, it is interesting to compare the Maslov index with the quantity $\int_\Sigma\omega$: the Lagrangian submanifold $L$ is \textit{monotone} if these quantities are proportional (with a fixed constant), for any such $\Sigma$. Monotonicity implies a very strong control on the Maslov index, thus on the deformation theory of surfaces with boundary on $L$: this has important consequences in Floer theory.

\section{A Chern-Weil formula for the Maslov index}\label{s:CWformula}

To simplify we restrict our attention to the oriented case. Specifically, let $(\Sigma,\partial\Sigma)$ be an oriented compact surface with boundary and $(E,F)$ be a bundle pair as in Section \ref{s:prelim}, with $\mbox{dim}_{\C}(E)=k$ and $F$ orientable.

Fix $\nabla$ on $E$. Choosing alternatively $N:=\Sigma$ or $N:=\partial\Sigma$ we can consider as in Section \ref{s:prelim} the induced connections on both $\Lambda^kE\rightarrow\Sigma$ and on $\Lambda^kE\rightarrow \partial\Sigma$. In both cases the bundle is trivial, though not uniquely: different choices are parametrized by maps $N\rightarrow\Sph^1$. 

In the latter case, however, the subbundle $F$ provides a canonical choice (up to homotopy) of a trivialization, as follows. Choose a hermitian metric $h$ on $E$ and an orientation of $F$. For any point $x\in\partial\Sigma$, define
\begin{equation}\label{eq:sigmaJ}
 \sigma_J:=\frac{\sigma_1\wedge\dots\wedge\sigma_k}{|\sigma_1\wedge\dots\wedge\sigma_k|_h},
\end{equation}
where $\sigma_1\dots\sigma_k$ is any positively-oriented (real) basis of the fiber $F_x$. It is clear that this definition is independent of the choice of basis. Choosing a different metric $\tilde{h}$ we obtain a section $\tilde{\sigma}_J=f\sigma_J$, for some $f:\partial\Sigma\rightarrow \R^+$, proving that $\sigma_J$ is well-defined up to homotopy. 

The connection on $\Lambda^kE\rightarrow\partial\Sigma$ is thus completely defined by the \textit{connection 1-form} $\theta\in\Lambda^1(M)$ defined by the identity \begin{equation*}
\nabla\sigma_J=\theta\otimes\sigma_J                                                                                                                                                                      
\end{equation*}
Changing metric gives $\nabla\tilde{\sigma}_J=\tilde{\theta}\otimes\tilde{\sigma}$, where $\tilde{\theta}=\theta+df$. Changing the orientation on $F$ gives $-\sigma_J$, thus the same connection 1-form.

\paragraph{The main formula.} We combine the curvature of $\Lambda^kE$ on $\Sigma$ with the connection 1-form on $\partial\Sigma$ to define the number
\begin{equation}\label{eq:MCW}
 \mu(E,F):=\frac{i}{\pi}\left(\int_\Sigma \tr(R)-\int_{\partial\Sigma}\theta\right).
\end{equation}
This is independent of $\theta$, \textit{i.e.} of $h$ thus of $\sigma_J$, because $\int_{\partial\Sigma}\tilde{\theta}=\int_{\partial\Sigma}\theta$. It is also independent of the choice of $\nabla$ because, using $\nabla+A$, the induced curvature on $\Sigma$ changes to $\tr(R)+d(\tr(A))$ while the connection 1-form becomes $\theta+\tr(A)$: the additional terms then cancel via Stokes' theorem. Choosing $\nabla$ unitary with respect to $h$ we find $\mu(E,F)\in\R$.

Notice that the pair $(E,F)$ induces a pair $(\Lambda^kE,\Lambda^kF)$ on $(\Sigma,\partial\Sigma)$. It is clear from the definition that $\mu(E,F)=\mu(\Lambda^kE,\Lambda^kF)$. Furthermore $\mu(\overline{E},F)=-\mu(E,F)$ because the induced connection on $\overline{E}\simeq E^*$ has the opposite sign. Finally, $\mu(E,F)$ changes sign if we change the  orientation of $\Sigma$. 

\begin{example} \label{ex:disk}
Let $\Delta$ denote the closed unit disk in $\C$. Let $(\Sigma,\partial\Sigma):=(\Delta,\Sph^1)$, let $E:=\C$ be the trivial bundle and $F:=T\Sph^1$ be the tangent bundle to the boundary. Choose the trivial (flat) connection and the standard metric $h$. Then $\sigma_J=\partial\psi$, the standard unit vector field along $\Sph^1$, and $\nabla\sigma_J=id\psi\otimes\sigma_J$ so the connection 1-form is $\theta=id\psi$. It follows that $\mu(E,F)=2$. 
\end{example}

\begin{remark}
When $\partial\Sigma=\emptyset$, (\ref{eq:MCW}) coincides with the standard Chern-Weil formula for $2c_1(E)\cdot\Sigma$.
\end{remark}

\begin{thm} The number $\mu(E,F)$ defined in (\ref{eq:MCW}) coincides with the Maslov index of $(E,F)$. In particular it is integer-valued.
\end{thm}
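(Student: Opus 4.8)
The plan is to exploit the two invariances already established for $\mu(E,F)$---independence of the connection $\nabla$ and of the Hermitian metric $h$---so as to reduce the formula to a pure winding-number computation, and then to match that winding number against the topological definition of the Maslov index. The closed case $\partial\Sigma=\emptyset$ is already disposed of by the preceding remark, since there the Maslov index is \emph{defined} to be $2\,c_1(E)\cdot\Sigma$; so I may assume $\partial\Sigma\neq\emptyset$, in which case $E$ is trivial over all of $\Sigma$.

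I would then fix a trivialization $E\cong\Sigma\times\C^k$ and take $\nabla$ to be the corresponding flat connection, with induced flat connection on $\Lambda^kE\cong\Sigma\times\C$. Then $R\equiv 0$, the bulk term $\int_\Sigma\tr(R)$ vanishes, and (\ref{eq:MCW}) collapses to $\mu(E,F)=-\tfrac{i}{\pi}\int_{\partial\Sigma}\theta$. In this trivialization $\sigma_J$ becomes a map $\partial\Sigma\to\Sph^1\subset\C$, and since $\nabla=\d$ one has $\theta=\sigma_J^{-1}\d\sigma_J$; writing $\sigma_J=e^{i\phi}$ locally gives $\theta=i\,\d\phi$, whence
\begin{equation*}
\mu(E,F)=-\frac{i}{\pi}\int_{\partial\Sigma} i\,\d\phi=\frac{1}{\pi}\int_{\partial\Sigma}\d\phi=2W,
\end{equation*}
where $W\in\Z$ is the total winding number of $\sigma_J$ around the circles of $\partial\Sigma$. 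This already yields integer-valuedness, and it is consistent with the reduction $\mu(E,F)=\mu(\Lambda^kE,\Lambda^kF)$ noted above.

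It remains to identify $2W$ with the Maslov index. Here I would write the totally real frame as $F_x=A(x)\R^k$ with $A:\partial\Sigma\to\GL(k,\C)$; then $A e_1\w\dots\w A e_k=\det(A)\,e_1\w\dots\w e_k$, so that $\sigma_J=\det(A)/|\det(A)|$ and $W$ is exactly the winding of $\det(A)/|\det(A)|$. The topological definition measures the twisting of $F$ inside the trivialized $E$, i.e. the class of $A$ in $\pi_1\big(\GL(k,\C)/\GL(k,\R)\big)\cong\Z$, which is detected by the winding of $\det(A)$ modulo $\R^*$, that is by the winding of $(\det A/|\det A|)^2$, equal to $2W$. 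Hence $\mu(E,F)$ coincides with the Maslov index.

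The one point requiring care---and the main potential obstacle---is pinning down this factor of $2$ and checking that the orientation and trivialization conventions in the topological definition agree with those built into $\sigma_J$; the computation in Example \ref{ex:disk} ($W=1$, $\mu=2$) furnishes the necessary consistency check. As an alternative to this direct argument, one could instead verify that $\mu(E,F)$ obeys the axioms characterizing the Maslov index (additivity under the connect-sum operations on $(E,F)$ and $(\Sigma,\partial\Sigma)$, the normalization of Example \ref{ex:disk}, and the closed-surface value $2c_1$) and invoke the uniqueness in that axiomatic characterization; here the additivity is transparent, being simply the additivity of the integrals in (\ref{eq:MCW}) under gluing.
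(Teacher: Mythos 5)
Your argument is correct, but it takes a genuinely different route from the paper. The paper's proof is a one-line appeal to the axiomatic characterization: it asserts that $\mu(E,F)$ satisfies the axioms of \cite{MS} Appendix C.3, with Example \ref{ex:disk} checking the normalization axiom — essentially the ``alternative'' you sketch in your closing paragraph. Your main argument instead exploits the already-established independence of $\nabla$ and $h$ to pass to a global trivialization of $E$ with flat connection, kills the bulk term, and identifies $-\tfrac{i}{\pi}\int_{\partial\Sigma}\theta$ with $2W$, where $W$ is the winding number of $\sigma_J=\det A/|\det A|$; this is then matched against the topological definition via $\pi_1\bigl(\GL(k,\C)/\GL(k,\R)\bigr)\cong\Z$ detected by the square of the determinant phase. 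What your approach buys is a self-contained proof that makes the integrality and the factor of $2$ visible directly, rather than delegating the work to the uniqueness statement of the axiomatic characterization; what the paper's approach buys is brevity and automatic agreement with all the standard properties (behaviour under connect sums, the closed-surface case) at once. You are right to flag the orientation and factor-of-$2$ conventions as the delicate point, and Example \ref{ex:disk} ($W=1$, $\mu=2$, matching the winding of $(\det A/|\det A|)^2$ for $F=T\Sph^1$) does pin them down. If you wanted to tighten the write-up further, you could note explicitly that for several boundary components both your winding number and the topological index are sums over components computed in the same global trivialization, so the identification is componentwise.
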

\begin{proof}
 It is simple to check that $\mu(E,F)$ satisfies the axioms listed in \cite{MS} Appendix C.3. Example \ref{ex:disk} verifies, for example, the normalization axiom.
\end{proof}

\section{The formula for immersed surfaces}\label{s:immersed}

Let $(M,J)$ be a real $2n$-dimensional manifold endowed with a (not necessarily integrable) complex structure $J$. Let $L\subset M$ be an oriented immersed $n$-dimensional totally real submanifold. In this section we assume that $\Sigma$ is immersed in $M$ in such a way that $\partial\Sigma$ is immersed in $L$. We then obtain canonical data $E:=TM_{|\Sigma}$, the pullback tangent bundle, and $F:=TL_{|\partial\Sigma}$, the totally real subbundle defined by the tangent bundle of $L$. Notice that $\Lambda^nE$ coincides with the anti-canonical bundle of $M$, \textit{i.e.} the dual of $K_M$. 

We are interested in computing $\mu_L(\Sigma,\partial\Sigma):=\mu(E,F)$. Fix a Hermitian metric $h$ on $M$ and a unitary connection $\nabla$. In \cite{LP} it is shown that ${K_M}_{|L}$ is trivial and admits a canonical section $\Omega_J$. The corresponding connection 1-form is an element of $i\Lambda^1(L,\R)$: writing it as $i\xi_J$ we obtain a real-valued 1-form $\xi_J$ on $L$ called the \textit{Maslov 1-form}. Restricted to $\partial\Sigma$, $\Omega_J$ is the dual of the section $\sigma_J$ defined in (\ref{eq:sigmaJ}) so the corresponding connection 1-forms differ by a sign.

Let $P$ denote the 2-form on $M$ defined, at any point $x\in M$, by
\begin{equation*}
 P(X,Y):=\omega(R(X,Y)e_i,e_i),
\end{equation*}
where $R$ is the curvature of $\nabla$ and $e_1\dots e_{2n}$ is an orthonormal basis of $T_xM$. One can check that $P=2i\tr(R)$, so general theory ensures that $d\xi_J=\frac{1}{2}P_{|TL}$.

It follows that, in this context, we can re-write the Maslov index in terms of the geometry of $(M,L)$:
\begin{equation}\label{eq:MCWbis}
\mu_L(\Sigma,\partial\Sigma)=\frac{1}{2\pi}\int_\Sigma P-\frac{1}{\pi}\int_{\partial\Sigma}\xi_J.
\end{equation}

\begin{cor}
 If $\Sigma_1$ and $\Sigma_2$ belong to the same homology class in $H_2(M,L)$ then $\mu_L(\Sigma_1,\partial\Sigma_1)=\mu_L(\Sigma_2,\partial\Sigma_2)$.
\end{cor}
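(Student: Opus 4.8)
The plan is to read the corollary off directly from formula \eq{eq:MCWbis}, exploiting the fact that its right-hand side is assembled entirely from two globally defined objects on the ambient data, namely $P\in\Lambda^2(M)$ and the Maslov $1$-form $\xi_J\in\Lambda^1(L)$, neither of which depends on the choice of $\Sigma$. The only $\Sigma$-dependent operation is the integration itself, so the statement reduces to the assertion that this integration defines a pairing on homology classes.

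First I would record the two structural facts already established above. Since $P=2i\tr(R)$ and $\tr(R)$ is closed by Chern-Weil theory, $P$ is a closed $2$-form on $M$; and by construction $d\xi_J=\tfrac12 P_{|TL}$. Together these say precisely that the pair $(P,2\xi_J)$ is a cocycle in the de Rham model of the relative cohomology $H^2(M,L;\R)$, whose differential sends $(\beta,\gamma)$ to $(d\beta,\ \iota^*\beta-d\gamma)$ for the inclusion $\iota\colon L\hookrightarrow M$. I would then observe that the right-hand side of \eq{eq:MCWbis}, up to the overall factor $\tfrac{1}{2\pi}$, is exactly the canonical pairing $\int_\Sigma P-\int_{\partial\Sigma}2\xi_J$ of the relative class $[(P,2\xi_J)]$ against $[\Sigma]\in H_2(M,L)$. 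Because this pairing factors through (co)homology, equal homology classes produce equal values, and the corollary follows after dividing by $2\pi$.

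If one prefers not to invoke the relative de Rham pairing as a black box, the same conclusion can be obtained by hand via Stokes. Writing $[\Sigma_1]=[\Sigma_2]$ as a relative boundary, i.e. $\Sigma_1-\Sigma_2=\partial W+C$ for a $3$-chain $W$ in $M$ and a $2$-chain $C$ supported in $L$, one computes $\int_{\Sigma_1}P-\int_{\Sigma_2}P=\int_W dP+\int_C P=\int_C P$ using $dP=0$, and then $\int_C P=\int_C\iota^*P=2\int_C d\xi_J=2\int_{\partial C}\xi_J$ using $P_{|TL}=2\,d\xi_J$. Since $\partial C=\partial\Sigma_1-\partial\Sigma_2$ inside $L$, this identity rearranges into $\tfrac{1}{2\pi}\int_{\Sigma_1}P-\tfrac1\pi\int_{\partial\Sigma_1}\xi_J=\tfrac{1}{2\pi}\int_{\Sigma_2}P-\tfrac1\pi\int_{\partial\Sigma_2}\xi_J$, which is the desired equality of Maslov indices.

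The geometric heavy lifting having already gone into establishing \eq{eq:MCWbis} and the relations $dP=0$ and $d\xi_J=\tfrac12 P_{|TL}$, I expect no serious difficulty to remain. The one point requiring genuine care is the relative-homology bookkeeping in the Stokes version: correctly realizing ``same class in $H_2(M,L)$'' as $\partial W$ plus a correction chain $C$ lying in $L$, and tracking the factor of $2$ together with the signs so that the boundary contribution $\int_{\partial C}\xi_J$ lands exactly on $\int_{\partial\Sigma_1}\xi_J-\int_{\partial\Sigma_2}\xi_J$.
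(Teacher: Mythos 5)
Your Stokes-theorem argument is exactly the paper's proof: the paper takes a $3$-chain $T$ with $\partial T=\Sigma_1-\Sigma_2-\Sigma$, $\Sigma\subseteq L$, and uses $dP=0$ together with $\tfrac12 P_{|TL}=d\xi_J$ in \eq{eq:MCWbis}, which is your $W$ and $C$ with the same bookkeeping and the correct factor of $2$. The relative de Rham repackaging is a harmless reformulation, so the proposal is correct and essentially identical in approach.
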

\begin{proof}
 This is a standard fact which can be proved using the topological definition of $\mu_L$. Alternatively, let $T$ be a 3-dimensional submanifold in $M$ such that $\partial T= \Sigma_1-\Sigma_2-\Sigma$, with $\Sigma\subseteq L$ and $\partial \Sigma=\partial\Sigma_1-\partial\Sigma_2$. Then $\frac{1}{2}\int_{\Sigma}P=\int_{\Sigma}d\xi_J$ so the result follows from Equation (\ref{eq:MCWbis}) and Stokes' theorem.
\end{proof}

The Maslov indices of such surfaces can thus be collected into a single relative cohomology class $\mu_L\in H^2(M,L)$, known as the \textit{Maslov class} of $L$.

When $M$ is K\"ahler one can check that $P(X,Y)=2\rho(X,Y)$, where the latter is the standard \textit{Ricci 2-form} $\rho(X,Y):=\mbox{Ric}(JX,Y)$. Furthermore, for any totally real submanifold we prove in \cite{LP} the fundamental relationships 
\begin{equation}\label{eq:K_fundamental}
\xi_J=\omega(H_J,\cdot)_{|TL},\ \ d\xi_J=\rho_{|TL},
\end{equation}
where $H_J$ is the negative gradient of the \textit{$J$-volume functional} defined by integrating $\Omega_J$ over $L$. If $L$ is Lagrangian the $J$-volume coincides with the standard Riemannian volume and $H_J$ coincides with the standard mean curvature vector field $H$ of $L$. 

\begin{cor}\label{cor:kahler_formula} Assume $(M,J,\omega)$ is K\"ahler with Ricci 2-form $\rho$. The Maslov index of $(\Sigma,\partial\Sigma)\subset (M,L)$ has the following integral representation:
 \begin{itemize}
\item For $L$ totally real with $J$-mean curvature $H_J$,
\begin{equation*}
 \mu_L(\Sigma,\partial\Sigma)=\frac{1}{\pi}\left(\int_\Sigma \rho-\int_{\partial\Sigma}\omega(H_J,\cdot)\right).
\end{equation*}
\item For $L$ Lagrangian with mean curvature $H$,
\begin{equation*}
 \mu_L(\Sigma,\partial\Sigma)=\frac{1}{\pi}\left(\int_\Sigma \rho-\int_{\partial\Sigma}\omega(H,\cdot)\right).
\end{equation*}
\end{itemize}
\end{cor}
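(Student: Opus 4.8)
The plan is to derive both formulae by direct substitution into the general immersed-surface identity \eq{eq:MCWbis}, specializing the two ingredients $P$ and $\xi_J$ to the K\"ahler setting. No new global argument is required: the content of the corollary is entirely local, amounting to reading off pointwise identities and integrating them over $\Sigma$ and $\partial\Sigma$ respectively.

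First I would handle the bulk term. In the K\"ahler case the curvature two-form $P$ satisfies $P=2\rho$, as noted immediately before the statement. Hence
\begin{equation*}
\frac{1}{2\pi}\int_\Sigma P=\frac{1}{\pi}\int_\Sigma\rho,
\end{equation*}
which produces the interior contribution common to both bullet points.

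Next I would treat the boundary term for $L$ totally real. The first of the fundamental relationships \eq{eq:K_fundamental}, namely $\xi_J=\omega(H_J,\cdot)_{|TL}$, lets me rewrite
\begin{equation*}
-\frac{1}{\pi}\int_{\partial\Sigma}\xi_J=-\frac{1}{\pi}\int_{\partial\Sigma}\omega(H_J,\cdot).
\end{equation*}
Combined with the bulk term this gives the first displayed formula. For $L$ Lagrangian I would invoke the stated fact that the $J$-volume reduces to the ordinary Riemannian volume, so that its negative gradient $H_J$ coincides with the standard mean curvature vector $H$; substituting $H_J=H$ into the formula just obtained yields the second bullet point.

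The corollary has no genuine obstacle of its own: the substantive work lies in the earlier establishment of \eq{eq:MCWbis}, the identity $P=2\rho$ in the K\"ahler case, and above all the relationships \eq{eq:K_fundamental} proved in \cite{LP}, which translate the abstract connection one-form into the geometric datum $\omega(H_J,\cdot)$. The only points requiring a moment's care are the sign conventions --- the connection one-forms of $\sigma_J$ and of the dual section $\Omega_J$ differ by a sign, as recorded in the setup, and this must be tracked so that the Maslov one-form $\xi_J$ enters \eq{eq:MCWbis} with the correct orientation --- and the tacit restriction of $\omega(H_J,\cdot)$ to $T(\partial\Sigma)\subset TL$ in the boundary integral. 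Both are already consistent with the conventions fixed above, so the verification is a routine check rather than a new estimate.
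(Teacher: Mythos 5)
Your proposal is correct and follows exactly the route the paper intends: substitute $P=2\rho$ and the identity $\xi_J=\omega(H_J,\cdot)_{|TL}$ from \eqref{eq:K_fundamental} into \eqref{eq:MCWbis}, then specialize $H_J=H$ in the Lagrangian case. The paper treats the corollary as an immediate consequence of these facts, so your write-up is essentially the same argument made explicit.
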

The Lagrangian case already appears, in implicit form, in \cite{CG}. Notice that in this case all terms in the Chern-Weil formula (\ref{eq:MCW}) for the Maslov index become classical geometric quantities.

In order to better understand the quantity $H_J$, let us recall the standard definition: a submanifold $L$ is \textit{minimal} if it is a critical point of the standard Riemannian volume, \textit{i.e.} $H=0$. Analogously, we say that a totally real submanifold $L$ is \textit{$J$-minimal} if it is a critical point of the $J$-volume functional, \textit{i.e.} $H_J=0$; equivalently, $\xi_J=0$ or $\Omega_J$ is parallel.

Equation (\ref{eq:K_fundamental}) shows that any minimal Lagrangian must automatically satisfy the additional condition $\rho_{|TL}\equiv 0$. Coupled with the Lagrangian condition $\omega_{|TL}\equiv 0$, the resulting system typically does not admit solutions unless $M$ is K\"ahler-Einstein: in this case the two conditions coincide. The notion of $J$-minimal submanifolds provides an interesting extension of the minimal Lagrangian condition to the more general setting of K\"ahler manifolds: they are again defined variationally and Equation (\ref{eq:K_fundamental}) again shows that they satify the condition $\rho_{|TL}\equiv 0$, but by definition they are only totally real, not necessarily Lagrangian. In \cite{LP3} we show that $J$-minimal submanifolds also have several analytic properties in common with minimal Lagrangians, and provide examples.

\begin{remark}
 When $M$ is not K\"ahler, torsion terms appear in gradient of the $J$-volume functional and in Equation (\ref{eq:K_fundamental}). We refer to \cite{LP} for details.
\end{remark}

\section{Applications}\label{s:applications}

We now list a few applications of the above formulae.

\paragraph{Comparison with the Gauss-Bonnet theorem.} Recall the standard Gauss-Bonnet theorem: given a smooth domain $U$ in an orientable surface $M$ with metric $g$, 
\begin{equation*}
 \int_U K \vol_g+\int_{\partial U} k \,ds=2\pi\,\chi(U),
\end{equation*}
where $K$ is the Gaussian curvature of $M$ and $k$ is the geodesic curvature of the boundary which, using an arclength parametrization $\gamma(s)$ of $\partial U$ and normal vector field $n(s)$, is defined by the identity $\nabla\dot\gamma=k\,ds\otimes n(s)$. 

For dimensional reasons such $M$ is automatically K\"ahler and $\partial U$ is trivially Lagrangian. Let us choose $(\Sigma,\partial\Sigma)=(\overline{U}, \partial U)$. Then $\sigma_J=\dot\gamma$ and $n=J\dot\gamma$ so $\theta=ik\,ds$. This shows that, up to a factor $\pi$, (\ref{eq:MCW}) coincides with the left-hand side of the Gauss-Bonnet formula. We conclude that $\mu_L(\Sigma,\partial\Sigma)=2\chi(U)$, generalizing Example \ref{ex:disk}.

\paragraph{Vanishing conditions.} Natural assumptions on $(M,L)$ ensure the vanishing of one or both terms in the formula for $\mu_L$:
\begin{itemize}
 \item Assume $P\equiv 0$. This happens for example when $M$ is K\"ahler Ricci-flat, \textit{e.g.} when $M$ is $\C^n$ or, more generally, Calabi-Yau. Then $\mu_L$ depends only on the boundary contribution.
 \item Assume $L$ is such that $\Omega_J$ is parallel: this is similar to the ``orthogonality'' condition appearing in \cite{CS}, but in our context it is geometrically motivated by the notion of $J$-minimal submanifolds. In this case the boundary contribution vanishes and the Maslov index is completely determined by $P$.
\end{itemize} 

As an application of the first vanishing statement we generalize a result of \cite{Dazord} from Lagrangian to totally real boundary data.
\begin{cor}\label{cor:integral_H}
 Assume $M$ is K\"ahler Ricci-flat. Then $\int_{\partial\Sigma}\omega(H_J,\cdot)$ is a multiple of $\pi$, for any totally real $L$ and any $(\Sigma,\partial\Sigma)\subset (M,L)$. 
\end{cor}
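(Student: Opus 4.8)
The plan is to reduce the statement directly to the integrality of the Maslov index, via the totally real case of Corollary \ref{cor:kahler_formula}. The key observation is that the hypothesis ``K\"ahler Ricci-flat'' is precisely engineered to kill the interior term in that formula, leaving the boundary contribution alone to carry the integer.

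First I would recall that for a K\"ahler manifold the Ricci 2-form is $\rho(X,Y)=\Ric(JX,Y)$, so the Ricci-flat condition $\Ric\equiv 0$ is equivalent to $\rho\equiv 0$. Consequently $\int_\Sigma\rho=0$ for every immersed $(\Sigma,\partial\Sigma)$. Substituting this into the totally real case of Corollary \ref{cor:kahler_formula} collapses the formula to
\begin{equation*}
\mu_L(\Sigma,\partial\Sigma)=-\frac{1}{\pi}\int_{\partial\Sigma}\omega(H_J,\cdot).
\end{equation*}

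The second and final step is to invoke the main Theorem of Section \ref{s:CWformula}, which identifies $\mu_L(\Sigma,\partial\Sigma)=\mu(E,F)$ with the topological Maslov index and hence guarantees $\mu_L(\Sigma,\partial\Sigma)\in\Z$. Rearranging the displayed identity then gives
\begin{equation*}
\int_{\partial\Sigma}\omega(H_J,\cdot)=-\pi\,\mu_L(\Sigma,\partial\Sigma)\in\pi\Z,
\end{equation*}
which is exactly the claim. No choice of connection, metric, or orientation enters in a way that could spoil the conclusion, since $\mu_L$ is independent of all these by the discussion following \eqref{eq:MCW}.

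Honestly, there is no serious obstacle here: the result is an immediate consequence of the two facts above, and the only point requiring even a moment's care is confirming that Ricci-flatness forces $\rho\equiv 0$ rather than merely making $\int_\Sigma\rho$ vanish for special $\Sigma$. The conceptual content lies entirely upstream, in Corollary \ref{cor:kahler_formula} and in the integrality theorem; this corollary is best presented simply as the arithmetic consequence of combining them, together with the remark that it extends \cite{Dazord} from the Lagrangian to the totally real setting precisely because $H_J$ (rather than the Riemannian $H$) is the geometrically correct boundary quantity for totally real $L$.
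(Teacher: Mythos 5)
Your proof is correct and follows exactly the route the paper intends: the paper leaves this corollary without an explicit proof precisely because it is the immediate consequence of the first vanishing condition ($\rho\equiv 0$ for K\"ahler Ricci-flat $M$) applied to Corollary \ref{cor:kahler_formula}, combined with the integrality of the Maslov index. Nothing to add.
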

For example, when $M=\C^n$ we can produce a 1-parameter family of totally real submanifolds $tL$ simply by rescaling. One can check that $H_J(t)=\frac{1}{t}H_J$, so the boundary contribution for $t\Sigma$ is constant. On the other hand Maslov indices are integral, thus also independent of $t$: this is consistent with Corollary \ref{cor:integral_H}.

The following result uses the second vanishing condition.

\begin{cor}\label{cor:definite_ricci}
 Let $M$ be K\"ahler. Assume the Ricci 2-form $\rho$ has a semi-definite sign (\textit{e.g.}, $\rho\leq 0$). Then, for any minimal Lagrangian $L$, $\mu_L(\Sigma,\partial\Sigma)$ has that same sign for any complex $\Sigma$ (\textit{e.g.}, $\mu_L\leq 0$). 
 
 More generally this holds for any $J$-minimal submanifold.
 \end{cor}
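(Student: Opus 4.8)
The plan is to combine the vanishing of the boundary term under minimality with a pointwise sign analysis of the bulk integrand. First I would apply Corollary \ref{cor:kahler_formula}: for a minimal Lagrangian $L$ one has $H=0$, so the boundary contribution $\int_{\partial\Sigma}\omega(H,\cdot)$ vanishes identically and the formula collapses to
\[
\mu_L(\Sigma,\partial\Sigma)=\frac{1}{\pi}\int_\Sigma\rho.
\]
Equivalently, minimality makes $\Omega_J$ parallel, so this is exactly the second vanishing condition recorded above. The problem is thereby reduced to determining the sign of $\int_\Sigma\rho$.

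For the second step I would use that $\Sigma$ is a complex curve, \emph{i.e.} that $T\Sigma$ is $J$-invariant, together with its induced complex orientation. At any point, picking a unit $e_1\in T\Sigma$ produces the oriented orthonormal frame $(e_1,Je_1)$ of $T\Sigma$, with respect to which the complex orientation is characterized by $\omega_{|\Sigma}=\omega(e_1,Je_1)\,dA$ with $\omega(e_1,Je_1)>0$, where $dA$ is the Riemannian area form. Since $\rho$ is likewise of type $(1,1)$, its restriction is $\rho_{|\Sigma}=\rho(e_1,Je_1)\,dA$. The semi-definiteness hypothesis means precisely that $\rho(X,JX)$ has a fixed sign for every $X$; hence the integrand $\rho(e_1,Je_1)$ carries that sign at each point, and integration gives that $\int_\Sigma\rho$, and therefore $\mu_L$, has the asserted sign.

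Finally, the extension to a $J$-minimal totally real submanifold is immediate: $J$-minimality means $H_J=0$, so the first item of Corollary \ref{cor:kahler_formula} again reduces $\mu_L$ to $\frac{1}{\pi}\int_\Sigma\rho$, and the identical pointwise argument applies. The one point that deserves care --- and which I expect to be the only genuine content beyond bookkeeping --- is the matching of orientations and sign conventions in the second step: one must verify that the complex orientation of $\Sigma$ is precisely the one making $\omega_{|\Sigma}>0$, so that the sign of $\rho$ as a $(1,1)$-form transfers to the sign of $\int_\Sigma\rho$ without a spurious flip. The essential structural input is that $\Sigma$ be $J$-holomorphic; for a surface whose tangent planes fail to be $J$-invariant, the restriction of a semi-definite $(1,1)$-form would in general be indefinite and the conclusion would break down.
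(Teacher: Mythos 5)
Your proposal is correct and follows the same route the paper intends: the minimality (resp. $J$-minimality) hypothesis kills the boundary term via Corollary \ref{cor:kahler_formula} (the ``second vanishing condition''), reducing $\mu_L$ to $\frac{1}{\pi}\int_\Sigma\rho$, whose sign is then read off pointwise from the restriction of the semi-definite $(1,1)$-form $\rho$ to the $J$-invariant tangent planes of $\Sigma$. Your explicit attention to the complex orientation making $\omega_{|\Sigma}>0$ is exactly the right point to check and is consistent with the paper's conventions.
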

The same statement holds also when $J$ is non-integrable, as long as one replaces $\rho$ with $P$ and $L$ satisfies $\nabla\Omega_J\equiv 0$.

There are also interesting situations where both terms vanish, leading to Maslov-zero submanifolds. For example, (\ref{eq:K_fundamental}) shows that $\xi_J$ is closed when $M$ is K\"ahler Ricci-flat. We thus obtain the following result.

\begin{cor}\label{cor:ricciflat}
Assume $M$ is K\"ahler Ricci-flat and that $L$ is either (i) minimal Lagrangian, or (ii) totally real and $J$-minimal, or (iii)
totally real with first Betti number $b^1(L)=0$. Then $L$ is Maslov-zero.
\end{cor}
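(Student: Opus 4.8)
The plan is to reduce everything to the integral representation of Corollary \ref{cor:kahler_formula} and then kill the two terms separately. Since $M$ is K\"ahler Ricci-flat we have $\rho\equiv 0$, so the bulk term $\frac{1}{\pi}\int_\Sigma\rho$ vanishes for every $\Sigma$ and the formula collapses to
\[
\mu_L(\Sigma,\partial\Sigma)=-\frac{1}{\pi}\int_{\partial\Sigma}\omega(H_J,\cdot)=-\frac{1}{\pi}\int_{\partial\Sigma}\xi_J.
\]
It therefore suffices, in each of the three cases, to show that this boundary integral vanishes for every admissible $(\Sigma,\partial\Sigma)$.

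For case (ii) this is immediate: $J$-minimality means $H_J=0$, equivalently $\xi_J\equiv 0$, so the integrand itself vanishes. Case (i) is then just the Lagrangian specialization: for $L$ Lagrangian the $J$-volume coincides with the Riemannian volume and $H_J=H$, so a minimal Lagrangian is in particular $J$-minimal and $\xi_J\equiv 0$ again.

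The one case with genuine content is (iii), where I would argue cohomologically rather than pointwise. By the second identity in (\ref{eq:K_fundamental}) we have $d\xi_J=\rho_{|TL}$, and Ricci-flatness gives $\rho\equiv 0$, so $\xi_J$ is a \emph{closed} $1$-form on $L$. The hypothesis $b^1(L)=0$ forces $H^1(L,\R)=0$, hence every closed $1$-form is exact: $\xi_J=df$ for some $f\colon L\to\R$. Pulling back along the immersion $\partial\Sigma\to L$ and using that $\partial\Sigma$ is a closed $1$-manifold, Stokes' theorem gives $\int_{\partial\Sigma}\xi_J=\int_{\partial\Sigma}df=0$. Thus the boundary term vanishes once more and $\mu_L\equiv 0$ in all three cases.

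I do not expect a serious obstacle: all the analytic input is already packaged in (\ref{eq:K_fundamental}) and Corollary \ref{cor:kahler_formula}. The only point requiring care is the global step in (iii)---passing from the pointwise closedness of $\xi_J$ to the vanishing of all its periods---which relies precisely on the topological hypothesis $b^1(L)=0$. Without it, a closed but non-exact $\xi_J$ could produce nonzero Maslov indices, so this hypothesis is exactly what makes the boundary contribution disappear in the totally real setting where neither $H_J=0$ nor the Lagrangian condition is assumed.
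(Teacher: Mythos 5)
Your proof is correct and follows exactly the route the paper intends: kill the bulk term via $\rho\equiv 0$, handle (i) and (ii) by $H_J=0$, and handle (iii) by noting $d\xi_J=\rho_{|TL}=0$ so that $b^1(L)=0$ forces $\xi_J$ to be exact and its periods over the closed curves $\partial\Sigma$ to vanish. This matches the paper's own (sketched) argument, which points precisely to the closedness of $\xi_J$ in the Ricci-flat case as the key observation.
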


Situation (i) includes the case of special Lagrangian submanifolds in Calabi-Yau manifolds. We thus obtain a new proof of the well-known fact that special Lagrangians are Maslov-zero: the standard proof relies on the topological definition of the Maslov index in terms of the twisting of $\Lambda^nTL$ with respect to the parallel section of $K_M$ determined by the Calabi-Yau condition.

\paragraph{Monotonicity of minimal Lagrangian submanifolds.} As mentioned in Section \ref{s:prelim}, in the symplectic context it is interesting to compare the Maslov index with $\int_\Sigma\omega$, the \textit{symplectic area}. This is best understood from the cohomological point of view introduced at the beginning of Section \ref{s:applications}. Indeed, for Lagrangian boundary data it is simple to check that the symplectic areas of two surfaces $\Sigma_1$, $\Sigma_2$ coincide if they belong to the same relative homology class. Symplectic area can thus also be viewed as a relative cohomology class $\alpha_L$, and our problem can be phrased in terms of comparing $\mu_L$ to $\alpha_L$ in $H^2(M,L)$.

Consider the long exact sequence
\begin{equation}\label{eq:longexact}
 \dots \rightarrow H^1(L) \stackrel{\delta}{\rightarrow} H^2(M,L) \stackrel{j}{\rightarrow} H^2(M) \rightarrow\dots
\end{equation}
The fact that the Maslov index is twice the first Chern number when $\Sigma$ has empty boundary shows that the image of $\mu_L$ in $H^2(M)$ is $2c_1(M)$. The image of $\alpha_L$ in $H^2(M)$ is $[\omega]$.

Assume $M$ is K\"ahler-Einstein, \textit{i.e.} $\rho=c\,\omega$ for some $c\in \R$. Then $2\pi c_1(M)=c[\omega]$ so $j(\pi\mu_L-c\,\alpha_L)=0\in H^2(M)$. Exactness implies that there must exist some $\beta\in H^1(L)$ such that $\delta(\beta)=\pi\mu_L-c\,\alpha_L$. We can find $\beta$ using our integral formula, as follows.

Corollary \ref{cor:kahler_formula} shows that
\begin{equation*}
 \pi\,\mu_L(\Sigma,\partial\Sigma)-c\int_\Sigma\omega=-\int_{\partial\Sigma}\omega(H,\cdot).
\end{equation*}
Equation (\ref{eq:K_fundamental}) and the K\"ahler-Einstein hypothesis imply that, for $L$ Lagrangian, $\omega(H,\cdot)$ is a closed 1-form on $L$ so it defines a cohomology class in $H^1(L)$. We thus obtain the main result of \cite{CG}.

\begin{cor}
 Assume $M$ is K\"ahler-Einstein with $\rho=c\,\omega$ and $L$ is Lagrangian. Then
 \begin{equation*}
 \pi\mu_L-c\,\alpha_L=-[\omega(H,\cdot)].
\end{equation*}
In particular, any minimal Lagrangian $L$ is monotone.
 \end{cor}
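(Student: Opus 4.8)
The plan is to promote the pointwise integral identity coming from Corollary \ref{cor:kahler_formula} to an equality of \emph{classes} in $H^2(M,L)$, and then obtain monotonicity as the degenerate case $H=0$. First I would record what the K\"ahler--Einstein hypothesis does to the boundary term. Since $\rho=c\,\omega$ and $L$ is Lagrangian, Equation \eqref{eq:K_fundamental} gives
\[
d\bigl(\omega(H,\cdot)\bigr)=d\xi_J=\rho_{|TL}=c\,\omega_{|TL}=0,
\]
so $\omega(H,\cdot)=\xi_J$ is a closed $1$-form on $L$ and defines a genuine class $[\omega(H,\cdot)]\in H^1(L)$. This is precisely the ingredient needed to interpret the right-hand side of the integral formula cohomologically, and it is why the Lagrangian (rather than merely totally real) hypothesis enters.

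The key step is to identify the connecting homomorphism $\delta$ of \eqref{eq:longexact} through its action on relative cycles. I would represent a class in $H_2(M,L)$ by a surface $(\Sigma,\partial\Sigma)$ with $\partial\Sigma\subset L$ and use the de Rham model of relative cohomology in which a closed relative form $(\alpha,\beta)$ is paired with $[\Sigma]$ by $\int_\Sigma\alpha-\int_{\partial\Sigma}\beta$. In this model a closed $1$-form $\eta$ on $L$ is sent by $\delta$ to the relative class of $(0,\eta)$, which pairs with $[\Sigma]$ purely through the boundary, namely $\langle\delta[\eta],[\Sigma]\rangle=\pm\int_{\partial\Sigma}\eta$. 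Combining this with the identity
\[
\pi\,\mu_L(\Sigma,\partial\Sigma)-c\int_\Sigma\omega=-\int_{\partial\Sigma}\omega(H,\cdot),
\]
which is Corollary \ref{cor:kahler_formula} specialized to $\rho=c\,\omega$, shows that $\pi\mu_L-c\,\alpha_L$ and $\delta[\omega(H,\cdot)]$ evaluate identically on every relative cycle.

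To pass from equal evaluations to equal classes I would invoke non-degeneracy of the de Rham pairing $H^2(M,L;\R)\times H_2(M,L;\R)\to\R$: since the two classes pair identically against all of $H_2(M,L)$, they coincide, which is the asserted identity $\pi\mu_L-c\,\alpha_L=-[\omega(H,\cdot)]$ under the identification furnished by $\delta$. (This also makes the existence of the class $\beta$ produced by exactness in the preamble completely explicit.) Monotonicity is then immediate: for $L$ Lagrangian the $J$-mean curvature reduces to the ordinary mean curvature $H$, so $L$ minimal Lagrangian means $H\equiv0$, hence $[\omega(H,\cdot)]=0$ and $\pi\mu_L=c\,\alpha_L$; that is, $\mu_L$ and $\alpha_L$ are proportional with fixed constant $c/\pi$, which is exactly the monotonicity condition of Section \ref{s:prelim}.

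The main obstacle I anticipate is purely bookkeeping: pinning down the correct sign and normalization of $\delta$ in the chosen de Rham model so that the boundary pairing matches the $-\int_{\partial\Sigma}\omega(H,\cdot)$ of the integral formula and produces the stated sign in front of $[\omega(H,\cdot)]$. Everything else—the closedness of $\omega(H,\cdot)$, the placement of $\pi\mu_L-c\,\alpha_L$ in the image of $\delta$ via the vanishing of its image $2\pi c_1(M)-c[\omega]$ in $H^2(M)$, and the non-degeneracy of the relative pairing over $\R$—is standard once the convention is fixed.
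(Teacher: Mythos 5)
Your proposal is correct and follows essentially the same route as the paper: the Kähler--Einstein hypothesis combined with Equation \eqref{eq:K_fundamental} makes $\omega(H,\cdot)$ closed, the long exact sequence places $\pi\mu_L-c\,\alpha_L$ in the image of $\delta$, and the integral formula of Corollary \ref{cor:kahler_formula} identifies the preimage as $-[\omega(H,\cdot)]$, with monotonicity following from $H\equiv 0$. Your extra care with the de Rham model of $\delta$ and the non-degeneracy of the relative pairing only makes explicit what the paper leaves implicit.
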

One should compare this with Corollary \ref{cor:definite_ricci} which produces a definite sign for the Maslov index only for complex curves, but with less stringent hypotheses.

\paragraph{Monotonicity of $J$-minimal submanifolds.} As discussed in Section \ref{s:immersed}, minimal Lagrangians make sense mostly in the context of K\"ahler-Einstein manifolds. In \cite{LP3} it is shown that, in the more general context of K\"ahler manifolds with (positive or negative) definite Ricci curvature, $J$-minimal submanifolds provide an interesting substitute. 

In this context $\rho$ defines a second symplectic form on $M$ and $\pm \rho(\cdot,J,\cdot)$ is a Riemannian metric. We will say that a submanifold is \textit{$\rho$-Lagrangian} if $\rho_{|TL}\equiv 0$. Any such $L$ is totally real. All standard results in Symplectic Geometry apply to $\rho$ and to $\rho$-Lagrangians, \textit{e.g.} the energy of holomorphic curves with boundary on $\rho$-Lagrangians is topologically determined by the homology class, thus bounded.

Given any surface $\Sigma$ with boundary on a $\rho$-Lagrangian $L$, the analogue of the symplectic area is the quantity
\begin{equation}
 \alpha_L(\Sigma,\partial\Sigma):=\int_{\Sigma}\rho.
\end{equation}
As usual this defines a class $\alpha_L\in H^2(M,L)$. We will say that $L$ is \textit{$\rho$-monotone} if $\mu_L$ is proportional to $\alpha_L$. 

Our main reason for interest in this notion is the fact that any $J$-minimal submanifold is $\rho$-Lagrangian: this is immediate from the definitions and from Equation (\ref{eq:K_fundamental}). In the long exact sequence (\ref{eq:longexact}), $\alpha_L$ has image $2\pi c_1(M)\in H^2(M)$ so $j(\pi\mu_L-\alpha_L)=0$. We then obtain the precise description of a class $\beta\in H^1(L)$, as above. This leads to the following result which further reinforces the analogies between $J$-minimal submanifolds and minimal Lagrangians.

\begin{cor} 
Assume $M$ is K\"ahler with (positive or negative) definite Ricci curvature. Then any $J$-minimal submanifold is $\rho$-monotone.
\end{cor}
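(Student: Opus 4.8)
The plan is to show that for a $J$-minimal $L$ the boundary contribution in the Maslov formula vanishes identically, which forces $\pi\mu_L$ and $\alpha_L$ to agree not merely cohomologically but as equal functionals on every relative cycle. First I would record that any $J$-minimal submanifold is $\rho$-Lagrangian: by definition $\xi_J\equiv 0$, and the second identity in (\ref{eq:K_fundamental}) then gives $\rho_{|TL}=d\xi_J=0$. This is what guarantees that $\alpha_L(\Sigma,\partial\Sigma):=\int_\Sigma\rho$ depends only on the class of $(\Sigma,\partial\Sigma)$ in $H_2(M,L)$, so that $\alpha_L\in H^2(M,L)$ is genuinely well defined and the notion of $\rho$-monotonicity applies to $L$ in the first place.

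The second step is a direct computation using Corollary \ref{cor:kahler_formula} in the totally real case, which for any surface with boundary on $L$ reads
\begin{equation*}
\pi\,\mu_L(\Sigma,\partial\Sigma)=\int_\Sigma\rho-\int_{\partial\Sigma}\omega(H_J,\cdot).
\end{equation*}
Since $L$ is $J$-minimal we have $H_J=0$, equivalently $\omega(H_J,\cdot)_{|TL}=\xi_J=0$ by (\ref{eq:K_fundamental}), so the boundary integral drops out and $\pi\mu_L(\Sigma,\partial\Sigma)=\int_\Sigma\rho=\alpha_L(\Sigma,\partial\Sigma)$. As this holds for every $(\Sigma,\partial\Sigma)$, the two relative classes satisfy $\pi\mu_L=\alpha_L$ in $H^2(M,L)$; in particular $\mu_L$ is proportional to $\alpha_L$, which is precisely $\rho$-monotonicity.

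I expect the only subtlety, rather than a genuine obstacle, to be the bookkeeping in the long exact sequence (\ref{eq:longexact}). The discussion preceding the statement already verifies $j(\pi\mu_L-\alpha_L)=0$, so exactness produces some $\beta\in H^1(L)$ with $\delta(\beta)=\pi\mu_L-\alpha_L$; the content of the proof is to identify $\beta=-[\xi_J]$, which is closed exactly because $L$ is $\rho$-Lagrangian, and then to observe that $\xi_J\equiv 0$ for $J$-minimal $L$, whence $\beta=0$ and the difference vanishes. This runs in complete parallel with the K\"ahler-Einstein argument above, where the analogous class was $-[\omega(H,\cdot)]$. The definiteness hypothesis on the Ricci curvature enters only to ensure that $\rho$ is a bona fide symplectic form, so that $\rho$-monotonicity is a meaningful symplectic condition and $J$-minimal submanifolds are the natural substitute for minimal Lagrangians; it plays no role in the proportionality computation itself, which holds whenever $M$ is K\"ahler and $L$ is $J$-minimal.
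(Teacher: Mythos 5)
Your proposal is correct and follows essentially the same route as the paper: it uses Equation (\ref{eq:K_fundamental}) to see that $J$-minimal implies $\rho$-Lagrangian (so $\alpha_L$ is well defined), then applies the totally real case of Corollary \ref{cor:kahler_formula} with $H_J=0$ to kill the boundary term and obtain $\pi\mu_L=\alpha_L$, matching the paper's identification of the class $\beta\in H^1(L)$ via the long exact sequence. Your closing remark that the definiteness of $\rho$ only serves to make $\rho$-monotonicity meaningful, and not the proportionality computation itself, is also consistent with the paper's discussion.
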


\paragraph{Monotonicity of solitons.} Recall that an immersed Lagrangian submanifold $\iota:L\rightarrow\C^n$ is a \textit{self-similar soliton} if, for some $c\in\R$ and all $x\in L$, $H(x)=c\,\iota(x)^\perp$. In this case, under mean curvature flow and up to reparametrization, $\iota$ evolves simply by rescaling: $\iota(t)=\rho(t)\cdot\iota$. Thus $H(t,x)=\frac{1}{\rho(t)}H(x)$ and each $\iota(t)$ is a soliton, with $c(t)=\frac{1}{\rho^2(t)}$. 

Using the Lagrangian hypothesis the soliton equation can be re-written as an equation of 1-forms on $L$:
\begin{equation*}
 \iota^*\omega(H,\cdot)=c\,\iota^*\omega(\iota,\cdot)=c\,\iota^*g(J\iota,\cdot).
\end{equation*}
Setting $\lambda:=\frac{1}{2}(xdy-ydx)$ and $\iota=(x,y)$ we find that the soliton equation is equivalent to $\iota^*\omega(H,\cdot)=2c\,\iota^*\lambda$. Now notice that $\omega=d\lambda$. Corollary \ref{cor:kahler_formula} then leads to the following fact, cf. \cite{GSSZ}.
\begin{cor}\label{cor:solitons}
 Let $L$ be a Lagrangian soliton in $\C^n$. Then $\mu_L=-\frac{2c}{\pi}\alpha_L$, so $L$ is monotone.
\end{cor}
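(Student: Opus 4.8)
The plan is to reduce everything to the integral formula of Corollary \ref{cor:kahler_formula} and then exploit the global primitive $\lambda$ of $\omega$ via Stokes' theorem. Since $\C^n$ is flat it is in particular K\"ahler and Ricci-flat, so its Ricci 2-form vanishes, $\rho\equiv 0$. Because $L$ is Lagrangian I may invoke the Lagrangian branch of Corollary \ref{cor:kahler_formula}, which then collapses to the purely boundary expression
\[
\mu_L(\Sigma,\partial\Sigma)=-\frac{1}{\pi}\int_{\partial\Sigma}\omega(H,\cdot).
\]
This isolates the single quantity I need to control: the integral of the mean-curvature 1-form around $\partial\Sigma$.

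Next I would substitute the soliton equation in the 1-form shape already derived above. The reformulation $\iota^*\omega(H,\cdot)=2c\,\iota^*\lambda$ turns the boundary integrand into a multiple of the explicit 1-form $\lambda=\frac{1}{2}(xdy-ydx)$, giving $\int_{\partial\Sigma}\omega(H,\cdot)=2c\int_{\partial\Sigma}\lambda$. The decisive step is then Stokes' theorem: since $\lambda$ is a globally defined primitive of $\omega$ on all of $\C^n$ (here the contractibility of $\C^n$, equivalently the exactness $\omega=d\lambda$, is what makes the argument go through), pulling back along the immersion of $\Sigma$ yields $\int_{\partial\Sigma}\lambda=\int_\Sigma d\lambda=\int_\Sigma\omega=\alpha_L(\Sigma,\partial\Sigma)$. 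Chaining these identities produces $\mu_L=-\frac{2c}{\pi}\,\alpha_L$.

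Finally I would observe that $c$ is a constant fixed by the given soliton $L$, so the proportionality $\mu_L=-\frac{2c}{\pi}\,\alpha_L$ holds with one and the same constant for every surface $(\Sigma,\partial\Sigma)$ with boundary on $L$; by the definition recalled in Section \ref{s:prelim} this is exactly the statement that $L$ is monotone.

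The main obstacle here is bookkeeping rather than conceptual: the genuine analytic content --- the Chern-Weil representation of the Maslov index and the fundamental relation $\xi_J=\omega(H,\cdot)_{|TL}$ from Equation (\ref{eq:K_fundamental}) --- has already been packaged into Corollary \ref{cor:kahler_formula}. The one point deserving care is the passage from the soliton condition $H=c\,\iota^\perp$ to the exact 1-form identity, where the Lagrangian hypothesis is used twice (to discard the tangential part of $\iota$ in $\omega(\iota^\perp,\cdot)_{|TL}=\omega(\iota,\cdot)_{|TL}$, and earlier to select the Lagrangian branch of Corollary \ref{cor:kahler_formula}), together with the elementary check $\omega(\iota,\cdot)=2\lambda$ needed for the constants to match cleanly.
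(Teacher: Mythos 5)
Your proof is correct and follows exactly the route the paper intends: apply the Lagrangian case of Corollary \ref{cor:kahler_formula} with $\rho\equiv 0$ on $\C^n$, substitute the soliton identity $\iota^*\omega(H,\cdot)=2c\,\iota^*\lambda$, and use $\omega=d\lambda$ with Stokes' theorem to convert the boundary term into $2c\,\alpha_L$. The only difference is that you spell out the steps the paper leaves implicit, including the correct observation that the Lagrangian condition is what lets one replace $\iota^\perp$ by $\iota$ inside $\omega(\cdot,\cdot)_{|TL}$.
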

Notice that $\mu_L$ is integral and $\alpha_L$ rescales by $\rho^2(t)$: this is consistent with the rescaling of $c(t)$.

\bibliographystyle{amsplain}
\bibliography{MCW}

\end{document}